\begin{document}

\newtheorem{theorem}{Theorem}[section]
\newtheorem{lemma}[theorem]{Lemma}
\newtheorem{corollary}[theorem]{Corollary}

\renewcommand{\labelenumi}{(\roman{enumi})\quad}
\numberwithin{equation}{part}
\renewcommand{\theequation}{\fnsymbol{equation}}

\setlength{\unitlength}{1cm}

\newcommand{\scrH}{\mathcal{H}}
\newcommand{\F}{\mathcal{F}}

\newcommand{\qH}{\Gamma\backslash\scrH}
\newcommand{\qF}{\Gamma\backslash F}

\newcommand{\intF}{\stackrel{\circ}{F}}

\newcommand{\Id}{\mathrm{Id}}

\newcommand{\Nat}{\mathbb{N}}
\newcommand{\Zed}{\mathbb{Z}}
\newcommand{\Zplus}{\Zed^+}
\newcommand{\Comp}{\mathbb{C}}
\newcommand{\Real}{\mathbb{R}}
\newcommand{\Hyp}{\mathbb{H}^2}

\newcommand{\minus}{\!-\!}
\newcommand{\plus}{\!+\!}

\title{Regular tessellations of the hyperbolic plane by fundamental
  domains of a Fuchsian group}
\author{Robert Yuncken\\Penn State University}

\maketitle

\noindent Running Head: ``Regular tessellations and Fuchsian groups''

\noindent Keywords: FUCHSIAN GROUP, REGULAR TESSELLATION, HYPERBOLIC PLANE,
 FUNDAMENTAL DOMAIN.

\noindent AMS Subject Classification: 20H10---Fuchsian groups and their
 generalizations.

\hspace{1.5cm}

\begin{abstract}

For positive integers $p$ and $q$ with $1/p+1/q<1/2$, a
tessellation of type $\{p,q\}$ is a tessellation of the hyperbolic
plane by regular $p$-gons with $q$ $p$-gons meeting at each vertex.
In this paper, a necessary and sufficient condition on 
the integers $p$ and $q$ is established 
to determine when a tessellation of type
$\{p,q\}$ can be realized as a tessellation of the hyperbolic plane by
fundamental domains of some Fuchsian group.  Specifically, a
tessellation of type $\{p,q\}$ is a tessellation by fundamental domains if and
only if $q$ has a prime divisor less than or equal to $p$.

\end{abstract}


It is well-known that, for integers $p$ and $q$ with $1/p+1/q<1/2$,
the hyperbolic plane $\Hyp$ can be tessellated by regular $p$-gons
with $q$ $p$-gons meeting at each vertex.  This is called a
tessellation of type $\{p,q\}$.  The group of all 
orientation-preserving symmetries of a
tessellation of type $\{p,q\}$ is a Fuchsian group, and a tessellation
of $\Hyp$ by fundamental domains for this group is obtained
by subdividing each of the regular $p$-gons in the original
tessellation into $p$ triangles sharing a vertex at its centre.

The question we deal with here is whether the $p$-gons themselves are
fundamental domains for some Fuchsian group.  Equivalently, one can
ask for which $p$ and $q$ does there exist a subgroup
of the full group of symmetries of the tessellation of type $\{p,q\}$
which acts freely and transitively
on the set of $p$-gons in the tessellation?
The following result answers this question.

\begin{theorem}
\label{thm}

The tessellation of type $\{p,q\}$ is a tessellation of the hyperbolic
plane by fundamental domains of some Fuchsian group if and only if $q$
has a prime divisor less than or equal to $p$.

\end{theorem}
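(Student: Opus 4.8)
The plan is to translate the geometric statement into a purely group-theoretic one and then into a question about permutations. Let $G=\langle a,b\mid a^{p}=b^{q}=(ab)^{2}=1\rangle$ be the orientation-preserving symmetry group (the von Dyck group of type $(2,p,q)$), where $a$ is the rotation by $2\pi/p$ about the centre of a fixed $p$-gon and $b$ the rotation by $2\pi/q$ about one of its vertices. Then $G$ acts transitively on the $p$-gons with stabiliser $\langle a\rangle\cong\Zed/p$, so the set of $p$-gons is $G/\langle a\rangle$. A subgroup $\Gamma\le G$ acts simply transitively on the $p$-gons precisely when $[G:\Gamma]=p$ and $\Gamma\cap g\langle a\rangle g^{-1}=\{1\}$ for all $g$. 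Passing to the action of $G$ on the $p$-element set $G/\Gamma$, I claim this is equivalent to the existence of a homomorphism $\phi\colon G\to S_{p}$ with $\phi(a)$ a $p$-cycle: given such a $\phi$ its image is automatically transitive (it contains the $p$-cycle $\phi(a)$), a point stabiliser $\Gamma$ has index $p$, and $\phi(a)$ being a $p$-cycle forces every nontrivial power of every conjugate of $a$ to act without fixed points, i.e.\ to avoid $\Gamma$. Thus the theorem becomes: such a $\phi$ exists if and only if $q$ has a prime divisor $\le p$.

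For necessity, suppose $\phi$ exists and write $\alpha=\phi(a)$, $\beta=\phi(b)$. Then $\beta^{q}=1$, so the order of $\beta$ divides $q$. Every non-identity element of $S_{p}$ has order equal to the lcm of its cycle lengths, each at most $p$, hence divisible by some prime $\le p$. If no prime divisor of $q$ were $\le p$, this would force $\beta=1$, whence $(\alpha\beta)^{2}=\alpha^{2}=1$; but $\alpha$ is a $p$-cycle of order $p\ge 3$, a contradiction. Therefore $q$ must have a prime divisor $\le p$.

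For sufficiency, let $r\le p$ be a prime divisor of $q$. Since $r\mid q$ there is a surjection $G\to\langle a',b'\mid a'^{p}=b'^{r}=(a'b')^{2}=1\rangle$ sending $a\mapsto a'$, $b\mapsto b'$, so it suffices to treat the case $q=r$ prime, i.e.\ to produce $\alpha,\beta\in S_{p}$ with $\alpha$ a $p$-cycle, $\beta^{r}=1$, and $(\alpha\beta)^{2}=1$. I will build these by a cut-and-join construction: take $\sigma$ to be a union of $s$ disjoint $r$-cycles together with $p-sr$ fixed points, and let $\tau$ be a product of disjoint transpositions whose underlying graph on the cycles of $\sigma$ (singletons regarded as leaves) is a spanning tree. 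Multiplying a permutation by a transposition that joins two of its cycles merges them, so along a spanning tree the product $\alpha:=\tau\sigma$ is a single $p$-cycle; setting $\beta:=\sigma^{-1}$ gives $\alpha\beta=\tau$, an involution, so that $(\alpha\beta)^{2}=1$ and $\beta^{r}=1$.

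The crux is the bookkeeping that makes this possible. The tree on the $k=s+(p-sr)$ cycles uses $k-1$ transpositions, and since the $p-sr$ singletons must be leaves, the $s$ $r$-cycles must absorb total degree $2s+(p-sr)-2$, which cannot exceed their capacity $sr$; this is the inequality $s\ge (p-2)/\bigl(2(r-1)\bigr)$, together with $s\le p/r$. A short estimate shows an integer $s$ in this range always exists for $2\le r\le p$, so a valid pair $(\sigma,\tau)$ — hence $\phi$, hence $\Gamma$, hence the desired tessellation by fundamental domains — can always be constructed. I expect this explicit construction, together with checking that the counting always admits a solution, to be the main obstacle; the reformulation and the necessity direction are comparatively routine.
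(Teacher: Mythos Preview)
Your argument is correct, and it reaches the same combinatorial core as the paper, but by a different route on both ends.

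\medskip

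\textbf{Reformulation.} The paper's Lemma~1 proves directly, via edge-pairings and a Poincar\'e--polygon style argument, that the tessellation is by fundamental domains iff there is an involution $\sigma\in S_p$ with $(\sigma\rho)^q=1$, where $\rho=(1\,2\,\cdots\,p)$. Your condition is the existence of $\phi:G\to S_p$ with $\phi(a)$ a $p$-cycle; taking $\alpha=\phi(a)=\rho$ and $\sigma=\phi(ab)$, the relation $(ab)^2=1$ gives exactly that $\sigma$ is an involution, and $\beta^q=1$ is equivalent to $(\sigma\rho)^q=1$ since $\beta=\alpha^{-1}\sigma$ makes $\sigma\rho=\beta^{-1}$. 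So the two criteria coincide. What differs is the justification: the paper works geometrically with the edge identifications of a fixed $p$-gon, while you invoke the coset action of the von~Dyck group on $G/\Gamma$. Your route is cleaner and avoids the somewhat delicate bookkeeping around vertices that occupies most of the paper's Lemma~1; the paper's route, on the other hand, is self-contained and does not presuppose the identification of the orientation-preserving symmetry group with the abstract triangle group.

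\medskip

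\textbf{Necessity.} Both arguments are the same in spirit: the order of $\beta$ (equivalently of $\sigma\rho$) is a nontrivial divisor of $q$ that also divides $p!$, hence has a prime factor at most $p$.

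\medskip

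\textbf{Sufficiency.} Here the two proofs genuinely diverge. The paper writes down, for any divisor $m\le p$ of $q$, an explicit involution $\sigma$ as a product of transpositions and then checks by direct computation that $\sigma\rho$ is a product of disjoint $m$-cycles. Your construction is more structural: take $\sigma^{-1}=\beta$ to be a disjoint union of $s$ $r$-cycles and fixed points, and multiply by an involution $\tau$ whose transpositions form a spanning tree on the cycles of $\sigma$, so that $\tau\sigma$ is a single $p$-cycle. This is correct --- the standard fact that a transposition joining two cycles merges them, applied along a tree, yields one cycle --- and it explains \emph{why} such an involution exists rather than merely exhibiting one. The trade-off is that you must verify the capacity inequality $(p-2)/\bigl(2(r-1)\bigr)\le s\le p/r$ admits an integer solution; this is true (e.g.\ $s=\lfloor p/r\rfloor$ works, by a short case split on $r=2$ versus $r\ge 3$ and $p<2r$ versus $p\ge 2r$), but you should spell it out rather than leave it as ``a short estimate''.
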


The proof of this theorem can be broken into two Lemmas.

\begin{lemma}

The tessellation of type $\{p,q\}$ is a tessellation of the hyperbolic
plane by fundamental domains of some Fuchsian group if and only if
there exists an involution $\sigma$ in $S_p$ (the symmetric group on
$p$ letters) such that $(\sigma\rho)^q=1$, where $\rho$ is the cyclic
permutation $(1~2~\ldots~p)$ of order $p$.

\end{lemma}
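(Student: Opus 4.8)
The plan is to extract the involution from the geometry of a fundamental polygon and to recognise $q$ as a divisibility constraint on the vertex cycles, so that the two implications become the two directions of Poincaré's polygon theorem. For the ``only if'' direction, suppose a subgroup $H$ of the full symmetry group acts freely and transitively on the $p$-gons. Fix a base $p$-gon $F_0$ and label its sides $1,\dots,p$ in cyclic order, so that the rotation of order $p$ about its centre induces the cycle $\rho=(1~2~\ldots~p)$ on the labels. Since $H$ is simply transitive on faces, each side $i$ of $F_0$ has a unique side-pairing element $a_i\in H$ carrying $F_0$ to its neighbour across side $i$, and the rule $\sigma(i)=$ (the side of $F_0$ glued to side $i$) defines an involution of $\{1,\dots,p\}$, a self-paired side folded by a half-turn being a fixed point of $\sigma$.

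Next I would compute the vertex cycles. Tracing the faces around a single vertex of $F_0$ one step at a time, and recording in the local coordinates of the current face which side is about to be crossed, one checks that a single step is the permutation $\rho\sigma$: crossing a side relabels the shared side by $\sigma$, and advancing to the next side around the vertex applies $\rho$. As the tessellation is of type $\{p,q\}$, exactly $q$ faces surround each vertex, so the trace returns to $F_0$ after $q$ steps; hence $(\rho\sigma)^q$ fixes every label, i.e.\ $(\rho\sigma)^q=1$. Since $\sigma\rho$ is conjugate to $\rho\sigma$ via $\sigma$, this is equivalent to $(\sigma\rho)^q=1$, producing the required involution.

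For the ``if'' direction I would reverse the construction. Given an involution $\sigma$ with $(\sigma\rho)^q=1$, take the regular $p$-gon $P$ with interior angle $2\pi/q$ (which exists because $1/p+1/q<1/2$) and pair its sides by orientation-preserving isometries according to $\sigma$, folding each fixed side by the half-turn about its midpoint. Read backwards, the previous calculation identifies the vertex cycles of this pairing with the cycles of $\rho\sigma$: a cycle of length $\ell$ gives a vertex class of $\ell$ corners, each of angle $2\pi/q$, with angle sum $\ell\cdot2\pi/q$. Poincaré's polygon theorem now applies: the side lengths match because $P$ is regular, and the cycle condition demands each angle sum equal $2\pi/m$ with $m\in\Zplus$, that is $\ell\mid q$, which is precisely $(\rho\sigma)^q=1$. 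The theorem then yields a Fuchsian group $H$ with $P$ as fundamental domain, whose translates tile $\Hyp$ with $\ell m=q$ copies of $P$ around each vertex; the tessellation is of type $\{p,q\}$ and $H$ acts freely and transitively on the $p$-gons.

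The main obstacle I anticipate is the bookkeeping in the middle step: fixing the orientation conventions carefully enough that one step around a vertex is exactly $\rho\sigma$ rather than an inverse or a differently placed conjugate, and checking that fixed points of $\sigma$ and the order-two elliptic elements they create are treated consistently. Verifying the remaining hypotheses of Poincaré's theorem (completeness, together with the cycle and edge conditions) for this compact, finite-sided polygon should be routine, so the real content lies in the exact match between the vertex cycles and the cycle structure of $\sigma\rho$.
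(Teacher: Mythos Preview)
Your argument is correct. The necessity direction is essentially the paper's: extract the side-pairing involution from the simply transitive action and read off $(\sigma\rho)^q=1$ by tracing the $q$ faces around a vertex, the only difference being that you track $\rho\sigma$ and then conjugate, whereas the paper tracks $\sigma\rho$ directly.

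The sufficiency direction is where you genuinely diverge. You invoke Poincar\'e's polygon theorem as a black box: the cycle condition becomes $\ell\mid q$ for every cycle length $\ell$ of $\rho\sigma$, which is exactly $(\rho\sigma)^q=1$, and then the theorem hands you the Fuchsian group and the tessellation. The paper instead argues by hand. It defines the side-pairing generators $\gamma_i$, sets up a bijection between words in the $\gamma_i$ and paths in the dual graph of the tessellation, obtains transitivity from connectedness of the dual graph, and obtains freeness by showing that every closed path based at $\F$ is a concatenation of conjugates of the ``primitive'' loops around single vertices, each of which multiplies to the identity precisely because $(\sigma\rho)^q=1$. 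In effect the paper reproves, in this combinatorial setting, the piece of Poincar\'e's theorem that you quote. Your route is shorter and makes the role of the vertex-cycle condition transparent; the paper's route is self-contained and exposes why simple connectedness of $\Hyp$ (encoded in the dual-graph statement about closed paths) is what forces freeness.
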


\begin{proof}

Let us fix a $p$-gon $\F$ in the tessellation.  Let
$v_1, \ldots, v_p$ be the vertices of $\F$, labeled clockwise from some
arbitrary vertex $v_1$.  Let $e_1$ denote the edge between $v_p$ and
$v_1$, and let $e_i$ denote the edge between $v_{i-1}$ and
$v_i$ for $i=2,\ldots,p$.

First, let us prove the necessity of the combinatorial condition.
Suppose that $\F$ is a fundamental domain for some Fuchsian group
$\Gamma$.  For each of the $p$ $p$-gons adjacent to $\F$, there is a
unique element of $\Gamma$ which maps it onto $\F$.  This induces the
well-known edge-pairing for a (polygonal) fundamental domain---namely,
for each edge $e_i$ of $\F$ there is a unique non-identity element
$\gamma_i\in\Gamma$ and a unique edge of $\F$, which we will denote by
$e_{\sigma(i)}$, such that $\gamma_i$ maps $e_{\sigma(i)}$ onto $e_i$.
Note that we do not exclude the possibility that $e_i=e_{\sigma(i)}$,
in which case $\gamma_i$ is a rotation about the midpoint of $e_i$ by $\pi$.

Since $\gamma_i^{-1}$ maps $e_i$ onto $e_{\sigma(i)}$, we see that
$\gamma_i^{-1}=\gamma_{\sigma(i)}$ and $e_{\sigma(\sigma(i))}=e_i$.
Thus the edge-pairing of $\F$ provides an involution $\sigma\in S_p$.
We now prove the relation between $\sigma$ and $\rho$ by
considering the $q$ images of $\F$ about any vertex $v_i$.

Consider any edge $e_i$ of $\F$.  The adjacent edge of $\F$ which shares
the vertex $v_i$ is $e_{\rho(i)}$, where $\rho$ is the cyclic
permutation defined in the statement of the theorem.  By the
definition of $\sigma$, $\gamma_{\sigma\rho(i)}\F$ is the $p$-gon adjacent to
$\F$ in the tessellation which shares the edge $e_{\rho(i)}$,
and $\gamma_{\sigma\rho(i)}v_{\sigma\rho(i)} = v_i$.  The edge of
$\gamma_{\sigma\rho(i)}\F$ which is adjacent
to $\gamma_{\sigma\rho(i)}e_{\sigma\rho(i)}$ and
shares the vertex $v_i$ is
$\gamma_{\sigma\rho(i)}e_{\rho\sigma\rho(i)}$.  Noting that
$\gamma_{\sigma\rho(i)}\gamma_{\sigma\rho\sigma\rho(i)}\gamma_{\sigma\rho(i)}^{-1}$
maps the edge $\gamma_{\sigma\rho(i)}e_{\sigma\rho\sigma\rho(i)}$ of
$\gamma_{\sigma\rho(i)}\F$ to
$\gamma_{\sigma\rho(i)}e_{\rho\sigma\rho(i)}$, we see that $\gamma_{\sigma\rho(i)}\gamma_{\sigma\rho\sigma\rho(i)}\F$ is the $p$-gon
adjacent to $\gamma_{\sigma\rho(i)}\F$ sharing the edge
$\gamma_{\sigma\rho(i)}\gamma_{\sigma\rho\sigma\rho(i)}e_{\sigma\rho\sigma\rho(i)}
= \gamma_{\sigma\rho(i)}e_{\rho\sigma\rho(i)}$.
Continuing in this manner anti-clockwise about the vertex $v_i$, we obtain 
$$\gamma_{\sigma\rho(i)}\gamma_{\sigma\rho\sigma\rho(i)}\ldots
      \gamma_{(\sigma\rho)^q(i)}\F = \F$$ 
and
$$\gamma_{\sigma\rho(i)}\gamma_{\sigma\rho\sigma\rho(i)}\ldots
      \gamma_{(\sigma\rho)^q(i)}e_{(\sigma\rho)^q(i)}=e_i.$$
Since $\F$ was assumed to be a fundamental domain for $\Gamma$, the first
equality shows that 
$$\gamma_{\sigma\rho(i)}\gamma_{\sigma\rho\sigma\rho(i)}\ldots
      \gamma_{(\sigma\rho)^q(i)} = 1,$$
and thus the latter gives 
$$(\sigma\rho)^q(i)=i.$$
This holds for any $i=1,\ldots,p$, and hence $(\sigma\rho)^q=1$.

Conversely, suppose we have an involution $\sigma\in S_p$ with
$(\sigma\rho)^q=1$.  We generate the group $\Gamma$ by using the
edge-pairing isometries described by $\sigma$.  Specifically, let $\gamma_i$
be the unique orientation-preserving isometry of the hyperbolic plane
which maps the edge
$e_i$ onto $e_{\sigma(i)}$ (and does not map $\F$ onto itself).  Now let
$\Gamma$ be the subgroup of the full group of symmetries of the
tessellation of type $\{p,q\}$ which is generated by
$\gamma_1,\ldots,\gamma_p$.  Note that we immediately have the
relation $\gamma_i^{-1}=\gamma_{\sigma(i)}$.  We need to show that
$\Gamma$ acts freely and transitively on the set of $p$-gons in the
tessellation. 

For any $\gamma\in\Gamma$, the $p$-gon adjacent to $\gamma\F$ and
sharing the edge $\gamma e_i$ is $\gamma\gamma_{\sigma(i)}\F$.
Furthermore, the generator $\gamma_{\sigma(i)}$ which appears in this
context is clearly unique as such amongst the generators
$\gamma_1,\ldots,\gamma_p$.  This observation yields a one-to-one
correspondence between words in these generators (not using inverses)
and paths in the dual graph\footnote{The dual graph of the
  tessellation is the graph whose vertex set is the set of $p$-gons in
  the tessellation, with two $p$-gons being joined by an edge if and
  only if the $p$-gons are adjacent.}
of the tessellation beginning at $\F$.  The correspondence associates
the word $\gamma_{i_1}\ldots\gamma_{i_n}$ to the path passing
successively through the $p$-gons
$$\F,\, \gamma_{i_1}\F,\, \ldots,\, \gamma_{i_1}\!\ldots\!\gamma_{i_n}\F.$$
Furthermore, two such paths are homotopic in the dual graph (relative to their
endpoints) if and only if their corresponding words can be made equal
using only the relations $\gamma_i \gamma_{\sigma(i)} = 1$.

The connectedness of the dual graph now immediately implies the
transitivity of $\Gamma$ acting on the set of $p$-gons of the
tessellation.  It remains to demonstrate the freeness of this
action.  In the context of the dual graph, it needs to be shown that every
path beginning and ending at $\F$ corresponds to a word which
multiplies in $\Gamma$ to give the identity element.

Using the arguments from the first part of the proof, one can observe
that the hypothesis $(\sigma\rho)^q=1$ is precisely the condition that
the word corresponding to the path
around any given vertex $v_i$ of $\F$ multiplies in $\Gamma$ to give
the identity.  Let us call these ``primitive
paths''.  To complete the proof one only
needs to observe that any path in the dual graph which begins and ends
at $\F$ can be written as a concatenation of conjugates of these
primitive paths.

\end{proof}

Theorem \ref{thm} is now reduced to a combinatorial
problem.

\begin{lemma}

There exists an involution $\sigma\in S_p$ such that
$(\sigma\rho)^q=1$ if and only if $q$ has a divisor $d\neq1$ which is
less than or
equal to $p$.

\end{lemma}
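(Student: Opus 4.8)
The plan is to recast everything in terms of the single permutation $\tau=\sigma\rho$ and then treat the two implications separately, the forward one being elementary and the reverse one requiring an explicit construction. Writing $\sigma=\tau\rho^{-1}$, the requirement that $\sigma$ be an involution is equivalent to the relation $\tau\rho^{-1}\tau=\rho$, while $(\sigma\rho)^q=1$ is simply $\tau^q=1$, i.e.\ every cycle length of $\tau$ divides $q$. Thus the statement becomes: there is a $\tau\in S_p$ with $\tau\rho^{-1}\tau=\rho$ all of whose cycle lengths divide $q$ if and only if $q$ has a divisor $d\neq1$ with $d\le p$. (Throughout I read ``involution'' as $\sigma^2=1$, allowing $\sigma=1$; this is forced, since for $\{p,q\}=\{3,9\}$ the only admissible $\sigma$ is the identity, corresponding in the previous Lemma to every edge being paired with itself.)

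For necessity, suppose such a $\tau$ exists. Were $\tau=1$ we would have $\sigma=\rho^{-1}$, which is a $p$-cycle and hence, since the hyperbolicity hypothesis $1/p+1/q<1/2$ forces $p\ge3$, not an involution. So $\tau\neq1$ and $\tau$ has a cycle of some length $\ell\ge2$. Because $\tau^q=1$, the order of $\tau$—and therefore each of its cycle lengths—divides $q$; in particular $\ell\mid q$. Since $\ell$ is a cycle length in $S_p$ we have $\ell\le p$ automatically. Thus $d=\ell$ is a divisor of $q$ with $1<d\le p$, as required.

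For sufficiency I first reduce. If $q$ has a divisor $d\neq1$ with $d\le p$, then it has a \emph{prime} divisor $d\le p$, and since $d\mid q$ it suffices to produce an involution $\sigma$ with $(\sigma\rho)^d=1$. The case $d=2$ is immediate: on $\Zed/p$ the reflection $\sigma\colon i\mapsto -i$ is an involution and $\sigma\rho\colon i\mapsto -i-1$ is again an involution, so $(\sigma\rho)^2=1$; likewise $\sigma=1$ handles the case $p\mid q$. It remains to treat an odd prime $d$ with $3\le d\le p$. Here $\tau$ must have order dividing the odd prime $d$, so each cycle of $\tau$ has length $1$ or $d$; the target is an involution $\sigma$ for which $\tau$ consists of $\lfloor p/d\rfloor$ cycles of length $d$ together with $p\bmod d$ fixed points. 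A useful guiding observation is that $\tau$ fixes a point $x$ precisely when $\sigma$ contains the adjacent transposition $(x\;x{+}1)$, so the fixed points of $\tau$ are ``paid for'' by adjacent transpositions of $\sigma$. When $d\le p\le 2d$ this gives an easy and completely explicit solution: taking $\sigma=\prod_{i=0}^{r-1}(2i\;\,2i{+}1)$ with $r=p-d$ produces exactly $r$ fixed points of $\tau$ at the even positions $0,2,\dots,2r-2$ and leaves the remaining $d$ points as a single $d$-cycle, and one checks by a direct trace that $\sigma\rho$ then has order $d$.

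The main obstacle is the range $p>2d$, where several $d$-cycles are needed. The difficulty is genuine: naively letting $\tau$ rotate $\lfloor p/d\rfloor$ consecutive blocks of length $d$ forces $\sigma$ to act as a long cycle on the block boundaries rather than as an involution. I would instead arrange $\tau$ as several consecutive length-$d$ blocks glued together by one further $d$-cycle running through the separating points (together with $p\bmod d$ fixed points supplied by adjacent transpositions as above), and then verify directly that $\sigma=\tau\rho^{-1}$ is a product of disjoint transpositions, i.e.\ that $\tau\rho^{-1}\tau=\rho$ holds. Equivalently one can set this up as an induction on $\lfloor p/d\rfloor$, splicing one additional $d$-cycle and new fixed structure into a valid $\sigma$ for $p$ to obtain one for $p+d$ while preserving the relation $\tau\rho^{-1}\tau=\rho$, the base cases $d\le p\le 2d$ above starting the induction. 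Pinning down this arrangement and carrying out the attendant cycle-chasing verification is the technical heart of the argument; the reductions, the necessity direction, and the small cases are routine by comparison.
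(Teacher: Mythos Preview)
Your necessity argument is clean and correct; in fact it is slightly more explicit than the paper's, which only says that the order of $\sigma\rho$ ``divides $p!$'' and leaves the reader to extract a divisor at most $p$.

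The genuine gap is in your sufficiency argument. You correctly reduce to a prime divisor $d\le p$, dispatch $d=2$ and the range $d\le p\le 2d$ by explicit formula, but for $p>2d$ you only offer a plan: arrange $\tau$ as ``several consecutive length-$d$ blocks glued together by one further $d$-cycle running through the separating points,'' or alternatively induct on $\lfloor p/d\rfloor$, and you explicitly defer the verification. As written this is not a proof; moreover the description is ambiguous (a ``further $d$-cycle'' on top of $\lfloor p/d\rfloor$ blocks would overshoot the number of points), so it is not clear the intended construction even exists. The constraint $\tau\rho^{-1}\tau=\rho$ is rigid---it forces $\tau(i)-1$ and $\tau(i+1)$ to be $\sigma$-paired for every $i$---and vague talk of ``gluing'' does not discharge it.

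By contrast, the paper avoids all case analysis and induction: it writes $p=am+r$ for the chosen divisor $m\le p$ and exhibits a single explicit product of disjoint transpositions $\sigma$ (two families: $a-1$ ``long-range'' swaps and $r$ adjacent swaps) for which $\sigma\rho$ decomposes into $a$ disjoint $m$-cycles. The first $a-1$ of these $m$-cycles each consist of $m-1$ consecutive integers together with one point taken from the far end, and the last $m$-cycle absorbs the remainder. This is the missing ``technical heart'' you allude to, and it is uniform in $p$ and $m$ with no need to restrict to prime $m$ or to treat $p\le 2m$ separately. Your base-case construction is in fact the $a=1$ specialisation of the paper's formula, so the quickest repair of your write-up would be to replace the sketched induction by the paper's closed-form $\sigma$ and verify the cycle decomposition directly.
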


\begin{proof}

If $(\sigma\rho)^q=1$ then $q$ is a multiple of the order of
$\sigma\rho$, which is greater than one and divides $p!$.  This proves
necessity.

For sufficiency, suppose $m\leq p$ is a divisor of $q$.  We will
explicitly construct an involution $\sigma\in S_p$ such that
$\sigma\rho$ has order $m$.    Put
$$p=am+r$$
for integers $a$ and $r$ with $0\leq r < m$.  Note that $a\geq1$ since
$m\leq p$.  Now define $\sigma$ as the following product of disjoint
transpositions:
\begin{eqnarray*}
 \sigma =
     \prod_{j=1}^{a-1} \Big(j(m\minus1)\plus1~~p\minus(j\minus1)\Big)
     \prod_{k=0}^{r-1} \Big(p\minus a\minus 2k~~p\minus a\minus (2k-1)\Big)
\end{eqnarray*}
This formula should be interpreted using the convention that if $a=1$
then the first product is empty, and if $r=0$ then the second product
is empty.
Calculation reveals that the product $\sigma\rho$ decomposes as the
following product of disjoint $m$-cycles:
\begin{eqnarray*}
\sigma\rho &=& \prod_{j=1}^{a-1} 
     \Big((j-1)(m\minus 1)\plus1~~(j-1)(m\minus 1)\plus
     2~~\ldots~~j(m\minus 1)~~p\minus (j\minus1)\Big) \cdot \\
&&     \Big((a\minus 1)(m\minus 1)\plus1~~(a\minus 1)(m\minus 1)\plus
     2~~\ldots \\
&& \hspace{1cm} \ldots~~
     p\minus a\minus 2r~~p\minus a\minus (2r\minus1)~~p\minus
        a\minus (2r\minus 3)~~\ldots\\
&& \hspace{6cm} \ldots~~
     p\minus a\minus 1~~p\minus a\plus1\Big).
\end{eqnarray*}

\end{proof}

The arguments above are not particular to hyperbolic
geometry.  The results hold equally in the Euclidean case
($1/p+1/q=1/2$) and the spherical case ($1/p+1/q>1/2$), where the
Fuchsian group is replaced by a discrete group of
orientation-preserving isometries of $\Real^2$ or $S^2$, respectively.
However, this does not add anything significant to the result since
there are only finitely many such regular tessellations.  In fact, one finds
that all such regular tessellations of the plane and sphere are
realisable as tessellations by fundamental domains of some discrete
group of orientation preserving isometries with one exception ---
the icosahedron
(type $\{3,5\}$).


\begin{thebibliography}{99}

\bibitem[1]{katok} S. Katok, ``Fuchsian Groups'', University of Chicago
Press, 1992.

\bibitem[2]{poincare} H. Poincar\'e, Th\'eorie des groupes Fuchsiens, Acta
Math. 1 (1882) 1--62.

\end{thebibliography}
\end{document}